\newtheorem{propo}{{\bf Proposition}}[section]
\newtheorem{coro}[propo]{{\bf Corollary}}
\newtheorem{lemma}[propo]{{\bf Lemma}} \newtheorem{theor}[propo]{{\bf
Theorem}} \newtheorem{ex}{{\sc Example}}[section]
\newenvironment{proof}{{\bf Proof.}}{$\Box$}
\begin{document}

\vspace*{1.0in}

\begin{center} COMPLEMENTS OF INTERVALS AND PREFRATTINI SUBALGEBRAS OF SOLVABLE LIE ALGEBRAS  
\end{center}
\bigskip

\begin{center} DAVID A. TOWERS 
\end{center}
\bigskip

\begin{center} Department of Mathematics and Statistics

Lancaster University

Lancaster LA1 4YF

England

d.towers@lancaster.ac.uk 
\end{center}
\bigskip

\begin{abstract} In this paper we study a Lie-theoretic analogue of a generalisation of the prefrattini subgroups introduced by W. Gasch\"utz. The approach follows that of P. Hauck and H. Kurtzweil for groups, by first considering complements in subalgebra intervals. Conjugacy of these subalgebras is established for a large class of solvable lie algebras. 
\par 
\noindent {\em Mathematics Subject Classification 2000}: 17B05, 17B20, 17B30, 17B50.
\par
\noindent {\em Key Words and Phrases}: Lie algebras, complemented, solvable, Frattini ideal, prefrattini subalgebra, residual. 
\end{abstract}

\section{Complements of subalgebra intervals}
Throughout, $L$ will denote a solvable Lie algebra over a field $F$. For a subalgebra $U$ of $L$ we denote by $[U:L]$ the set of all subalgebras $S$ of $L$ with $U \subseteq S \subseteq L$. We say that $[U:L]$ is {\em complemented} if, for any $S \in [U:L]$ there is a $T \in [U:L]$ such that $S \cap T = U$ and $<S,T> = L$. Our objective is to study the set
\[
\Omega(U,L) = \{S \in [U:L] : [S:L] \hbox{ is complemented}\};
\] 
in particular, to show that, for a large class of solvable Lie algebras $L$, the minimal elements of this set, $\Omega(U,L)_{min}$, are conjugate in $L$. The development initially follows closely that of \cite{hk}. 
\par

We denote by $[U:L]_{max}$ the set of maximal subalgebras in $[U:L]$; that is, the set of maximal subalgebras of $L$ containing $U$. If $L = A + B$ where $A$ and $B$ are subalgebras of $L$ and $A \cap B = 0$ we will write $L = A \oplus B$. 
\bigskip

\begin{lemma}\label{l:one} If $S \in \Omega(U,L)$, $S \neq L$ then $S = \bigcap \{M : M \in [S:L]_{max} \}$.
\end{lemma}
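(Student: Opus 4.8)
The plan is to establish the nontrivial inclusion $W \subseteq S$, where I write $W = \bigcap\{M : M \in [S:L]_{max}\}$ for the intersection on the right-hand side; the reverse inclusion $S \subseteq W$ is immediate, since every $M \in [S:L]_{max}$ contains $S$ by definition. Before anything else I would record that $[S:L]_{max}$ is nonempty: because $S \neq L$ is a proper subalgebra of the (finite-dimensional) Lie algebra $L$, it lies inside at least one maximal subalgebra, and every such maximal subalgebra automatically contains $S$, so it belongs to $[S:L]_{max}$. Thus $W$ is a well-defined element of $[S:L]$ with $S \subseteq W$.

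The heart of the argument is to exploit the complementedness of $[S:L]$ at the single subalgebra $W$. Since $S \in \Omega(U,L)$, the interval $[S:L]$ is complemented, so there is some $T \in [S:L]$ with $W \cap T = S$ and $\langle W, T \rangle = L$. I would then argue by contradiction: suppose $S \subsetneq W$. From $W \cap T = S \neq W$ we get $W \not\subseteq T$, and in particular $T \neq L$ (else $W \cap T = W$), so $T$ is a proper subalgebra of $L$ containing $S$.

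To close, I would again use that a proper subalgebra sits inside a maximal one: $T$ is contained in some maximal subalgebra $M_0$ of $L$, and since $M_0 \supseteq T \supseteq S$ we have $M_0 \in [S:L]_{max}$. But then $W \subseteq M_0$ by the very definition of $W$, so $\langle W, T \rangle \subseteq M_0 \subsetneq L$, contradicting $\langle W, T\rangle = L$. Hence $S = W$, as required. The step that carries the real content is the decision to apply the complemented hypothesis to $W$ itself rather than to an arbitrary member of the interval; once that choice is made the rest is forced. The only point needing care is the two appeals to the existence of maximal subalgebras above a given proper subalgebra, which rely on the standing finite-dimensionality of $L$.
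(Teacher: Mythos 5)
Your proof is correct and takes essentially the same route as the paper's: both apply the complementedness of $[S:L]$ to the intersection $W$ itself, and observe that any proper complement would lie in some $M \in [S:L]_{max}$, which also contains $W$, contradicting $\langle W,T\rangle = L$. The only cosmetic difference is that you argue by contradiction from $S \subsetneq W$, whereas the paper directly concludes that the complement must equal $L$ and then computes $S = W \cap L = W$.
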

\begin{proof} Put $T = \bigcap \{M : M \in [S:L]_{max} \}$. Then $[S:L]$ is complemented, since $S \in \Omega(U,L)$, and so $T$ has a complement $C$ in $[S:L]$. If $C \neq L$ then $C \subseteq M$ for some $M \in [S:L]_{max}$. But then $<T,C>  = M$, contradicting the fact that $C$ is a complement of $T$ in $[S:L]$. Hence $C = L$ and $S = T \cap C = T \cap L = T$, as required.
\end{proof}
\bigskip

The Frattini subalgebra of $L$, $\phi(L)$, is the intersection of the maximal subalgebras of $L$. When $L$ is solvable this is always an ideal of $L$, by \cite[Lemma 3.4]{bg}. Extending this notion slightly we put $\phi(S,L) = \bigcap \{M : M \in [S:L]_{max} \}$; clearly, $\phi(0,L) = \phi(L)$. The above lemma shows that $\phi(U,L) \subseteq S$ for all $S \in \Omega(U,L)$.

\begin{lemma}\label{l:two} If $I$ is an ideal of $L$ and $S \in \Omega(U,L)$, then $S + I \in \Omega(U,L)$.
\end{lemma}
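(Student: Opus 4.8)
The plan is to verify directly that $S+I$ lies in $\Omega(U,L)$ by exhibiting a complement in $[S+I:L]$ for every subalgebra in that interval. First I would record the easy membership: since $I$ is an ideal, $S+I$ is a subalgebra, and $U \subseteq S \subseteq S+I \subseteq L$, so $S+I \in [U:L]$. It then remains only to show that $[S+I:L]$ is complemented.

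To that end I would fix an arbitrary $W \in [S+I:L]$, so that $S+I \subseteq W \subseteq L$, and try to produce a complement of $W$ in $[S+I:L]$. The key idea is to start one level down: since $S \subseteq S+I \subseteq W$ and $S \in \Omega(U,L)$, the interval $[S:L]$ is complemented, so $W$ already has a complement $T_0 \in [S:L]$, meaning $W \cap T_0 = S$ and $\langle W,T_0 \rangle = L$. My candidate complement of $W$ in the larger interval is then $T := T_0 + I$, obtained by pushing $T_0$ up by the ideal $I$. Because $S \subseteq T_0$ and $I$ is an ideal, $T$ is a subalgebra with $S+I \subseteq T \subseteq L$, so $T \in [S+I:L]$; and since $T \supseteq T_0$ we get $\langle W,T \rangle \supseteq \langle W,T_0 \rangle = L$, hence $\langle W,T \rangle = L$.

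The only substantive computation is the intersection $W \cap T$, and here I would invoke the Dedekind modular law. Since $I \subseteq S+I \subseteq W$, the containment needed for the identity $A \cap (B+C) = (A \cap B) + C$ holds with $A = W$, $B = T_0$, $C = I$, giving
\[
W \cap T = W \cap (T_0 + I) = (W \cap T_0) + I = S + I.
\]
Thus $T$ is a complement of $W$ in $[S+I:L]$. As $W$ was arbitrary, $[S+I:L]$ is complemented and $S+I \in \Omega(U,L)$.

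I expect the main thing to get right to be the applicability of the modular law rather than any deep difficulty: the identity requires precisely the inclusion $I \subseteq W$, which is automatic because $I \subseteq S+I \subseteq W$. So the whole argument reduces to the single device of transporting a complement from $[S:L]$ up to $[S+I:L]$ by adjoining the ideal $I$, with the modular law doing the bookkeeping that keeps the intersection exactly equal to $S+I$.
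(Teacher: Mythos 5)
Your proposal is correct and follows essentially the same argument as the paper: take a complement $D$ of the given subalgebra in $[S:L]$, pass to $D+I$, and use the modular law (valid since $I \subseteq S+I \subseteq W$) to compute the intersection as $S+I$. The extra care you take in checking that $T_0+I$ is a subalgebra lying in $[S+I:L]$ and that the modular law applies is exactly the bookkeeping the paper leaves implicit.
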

\begin{proof} Let $B \in [S+I:L] \subseteq [S:L]$. Since $S \in \Omega(U,L)$, $B$ has a complement $D$ in $[S:L]$; that is $B \cap D = S$ and $<B,D> = L$. Put $C = D+I$. Then $<B,C> = L$ and $B \cap C = B \cap (D+I) = B \cap D + I = S + I$, whence $C$ is a complement for $B$ in $[S+I:L]$ and $S+I \in \Omega(U,L)$. 
\end{proof}

\begin{lemma}\label{l:three} Let $A$ be a minimal ideal of $L$ and let $M$ be a complement of $A$ in $L$ containing $U$. Then $\Omega(U,M) = \{ S \in \Omega(U,L) : S \subseteq M \}$. In particular $\Omega(U,M)_{min} = \{ S \in \Omega(U,L)_{min} : S \subseteq M \}$.
\end{lemma}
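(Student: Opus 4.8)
The plan is to exploit the decomposition $L = A \oplus M$. Since $A$ is an ideal and $M$ a subalgebra, the projection $\pi : L \to M$ along $A$ is a surjective Lie algebra homomorphism with kernel $A$ and $\pi|_M = \mathrm{id}$; I will use repeatedly that $\pi(\langle X, Y\rangle) = \langle \pi(X), \pi(Y)\rangle$ and that $\pi(X) = X$ whenever $X \subseteq M$. I would prove the set equality $\Omega(U,M) = \{S \in \Omega(U,L) : S \subseteq M\}$ by two inclusions, then read off the statement about minimal elements at the end. Throughout I note that $A$, being a minimal ideal of a solvable algebra, is abelian, so that its $M$-submodules are exactly the ideals of $L$ contained in $A$; minimality of $A$ then says $A$ is an irreducible $M$-module, a fact I will need for one inclusion.

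For the inclusion $\supseteq$, suppose $S \in \Omega(U,L)$ with $S \subseteq M$ and take $B \in [S:M]$; I must complement $B$ inside $[S:M]$. The key idea is to complement $B + A$ rather than $B$, since complementing $B$ directly can leave an unwanted $A$-component in the intersection. As $S \in \Omega(U,L)$ and $B + A \in [S:L]$, there is $D \in [S:L]$ with $(B+A) \cap D = S$ and $\langle B+A, D\rangle = L$, and I would set $C = \pi(D)$. Applying $\pi$ to $\langle B+A, D\rangle = L$ gives $\langle B, C\rangle = M$ (using $\pi(B+A) = B$), and a short check gives $B \cap C = S$: if $x \in B \cap C$ then $x = \pi(d)$ with $d \in D$, so $d = x + a \in (B+A)\cap D = S \subseteq M$, forcing $x = d \in S$. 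Hence $C$ complements $B$ in $[S:M]$ and $S \in \Omega(U,M)$.

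For the inclusion $\subseteq$, suppose $S \in \Omega(U,M)$ and take any $B \in [S:L]$. Put $B' = \pi(B) \in [S:M]$ and let $C'$ complement $B'$ in $[S:M]$, so $B' \cap C' = S$ and $\langle B', C'\rangle = M$. One checks directly that $B \cap C' = S$ in $L$ (any element of the intersection lies in $M$, hence in $B' \cap C' = S$), so the only thing that can fail is generation. Writing $H = \langle B, C'\rangle$ and applying $\pi$ gives $\pi(H) = M$, i.e. $H + A = L$. Here is where I use the minimal ideal: $H \cap A$ is an $H$-submodule of $A$, and since the action of $H$ on $A$ factors through $\pi(H) = M$, irreducibility of $A$ as an $M$-module forces $H \cap A \in \{0, A\}$. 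If $H \cap A = A$ then $H = L$ and $C'$ already complements $B$; if $H \cap A = 0$ then $B \cap A \subseteq H \cap A = 0$, and I claim $C' + A$ works, since it contains $H$ and $A$ and hence equals $L$, while $B \cap (C'+A) = S + (B \cap A) = S$. Either way $B$ has a complement in $[S:L]$, so $S \in \Omega(U,L)$. This case analysis, powered by the irreducibility of $A$, is the step I expect to be the main obstacle, because for a general $B$ the projected complement $C'$ need not by itself generate $L$.

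Finally, the statement about minimal elements is a formal consequence. The two displayed sets being equal, their minimal elements agree; and since any $T \subsetneq S$ with $S \subseteq M$ automatically satisfies $T \subseteq M$, an element of $\{S \in \Omega(U,L) : S \subseteq M\}$ is minimal in this set precisely when it is minimal in all of $\Omega(U,L)$. Combining these two observations yields $\Omega(U,M)_{min} = \{S \in \Omega(U,L)_{min} : S \subseteq M\}$.
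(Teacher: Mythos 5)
Your proof is correct, and while it rests on the same decomposition $L = A \oplus M$ as the paper's, it is organized around a different device: the projection homomorphism $\pi : L \to M$ with kernel $A$. In the forward direction the paper first invokes Lemma \ref{l:two} to get $S+A \in \Omega(U,L)$ and then transfers complements through the lattice isomorphism $[S:M] \cong [S+A:L]$; you instead complement $B+A$ directly in $[S:L]$ and push the complement into $M$ via $\pi$, which is the same mechanism made explicit and lets you bypass Lemma \ref{l:two} entirely. The converse direction is where the routes genuinely diverge: the paper complements $B \cap M$ in $[S:M]$ and splits on whether $B \subseteq M$, disposing of the case $B \not\subseteq M$ by the maximality of $M$ in $L$, whereas you complement $\pi(B)$ (which can be strictly larger than $B \cap M$) and split on whether $H \cap A$ is $0$ or $A$, disposing of the latter case by the irreducibility of $A$ as an $M$-module --- maximality of $M$ and irreducibility of $A$ being two faces of the same fact. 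Your choice of $\pi(B)$ over $B \cap M$ is precisely what makes $B \cap (C'+A) = S + (B \cap A)$ hold even when $B \not\subseteq M$: an element $c + a \in B$ with $c \in C'$, $a \in A$ has $c = \pi(c+a) \in \pi(B) \cap C' = S$, and it would be worth writing that line out rather than asserting the identity. One phrase should also be repaired: ``$C'+A$ works, since it contains $H$ and $A$'' should read that $\langle B, C'+A\rangle$ contains $H$ and $A$ (and hence equals $L$), since $C'+A$ itself need not contain $B$. Your closing observation that minimality within $\{S \in \Omega(U,L) : S \subseteq M\}$ coincides with minimality in $\Omega(U,L)$, because any smaller element of $\Omega(U,L)$ automatically lies in $M$, is exactly the point the paper leaves implicit in its ``in particular'' clause.
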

\begin{proof} Note that since $L$ is solvable, $M$ is a maximal subalgebra of $L$ and $L = A \oplus M$. Suppose first that $S \in \Omega(U,L)$ with $S \subseteq M$. Then $S + A \in \Omega(U,L)$ by Lemma \ref{l:two}. The interval $[S:M]$ is lattice isomorphic to $[S+I:L]$ and so is complemented. Hence $S \in \Omega(U,M)$.
\par

Conversely, let $S \in \Omega(U,M)$. Then $[S:M]$ is complemented. We need to show that $S \in \Omega(U,L)$; that is, that $[S:L]$ is complemented. Let $B \in [S:L]$. Then $B \cap M \in [S:M]$, so there is a subalgebra $D \in [S:M]$ such that $<B \cap M, D> = M$ and $B \cap D = B \cap M \cap D = S$.
\par

If $B \not \subseteq M$ then $M$ is a proper subalgebra of $<B,D>$. But $M$ is a maximal subalgebra of $L$, and so $<B,D> = L$ and $D$ is a complement of $B$ in $[S:L]$. Hence $[S:L]$ is complemented.
\par

If $B \subseteq M$, put $C = D + A$. Then 
$$L = A \oplus M \subseteq \hspace{.1cm} <B,A> + <B,D> \hspace{.1cm} \subseteq \hspace{.1cm} <B, D+A> \hspace{.1cm} = \hspace{.1cm} <B,C>,$$ 
so $<B, D+A> = L$. Also 
$$B \cap C = B \cap (D+A) = B \cap M \cap (D+A) = B \cap (D + M \cap A) = B \cap D = S,$$ yielding that $C$ is a complement of $B$ in $[S:L]$ and $[S:L]$ is complemented. 
\end{proof}

\begin{lemma}\label{l:four} Let $A$ be a minimal ideal of $L$ and let $S \in \Omega(U,L)_{min}$ with $A \not \subseteq S$. Then there is an $M \in [S:L]_{max}$ such that $A \not \subseteq M$.
\end{lemma}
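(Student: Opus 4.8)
The plan is to argue by contradiction, with the entire argument resting on Lemma~\ref{l:one}. First I would check that Lemma~\ref{l:one} is applicable to $S$. Since $S \in \Omega(U,L)_{min} \subseteq \Omega(U,L)$, the only thing to verify is that $S \neq L$; but this is immediate, because $A \not\subseteq S$ while $A \subseteq L$, so $S$ cannot equal $L$. Lemma~\ref{l:one} then yields
\[
S = \phi(S,L) = \bigcap \{M : M \in [S:L]_{max}\}.
\]

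Now suppose, for contradiction, that no suitable $M$ exists; that is, suppose $A \subseteq M$ for every $M \in [S:L]_{max}$. Then $A$ lies in the intersection of all of these maximal subalgebras, so $A \subseteq \bigcap \{M : M \in [S:L]_{max}\} = S$, contradicting the hypothesis $A \not\subseteq S$. Hence at least one $M \in [S:L]_{max}$ must fail to contain $A$, which is precisely the assertion of the lemma.

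The argument is short because Lemma~\ref{l:one} already encodes the substantive content, namely that each element of $\Omega(U,L)$ is recovered as the intersection of the maximal subalgebras lying above it; once that characterisation is in hand, the present statement is just its contrapositive applied to the ideal $A$. Consequently the only genuine point of care is the applicability of Lemma~\ref{l:one}, i.e. the verification that $S \neq L$, and there is no serious obstacle beyond this. It is worth remarking that neither the minimality of $A$ as an ideal nor the minimality of $S$ within $\Omega(U,L)$ is actually used here—plain membership $S \in \Omega(U,L)$ together with $A \not\subseteq S$ already suffices—so the lemma is stated in exactly the form in which it will be invoked later.
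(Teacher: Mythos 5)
Your proof is correct and is exactly the argument the paper intends: the paper's proof of this lemma is simply ``This follows easily from Lemma \ref{l:one},'' and your write-up supplies the details (the check that $S \neq L$, and the contrapositive step that if every $M \in [S:L]_{max}$ contained $A$ then $A \subseteq \bigcap M = S$). Your closing observation that only $S \in \Omega(U,L)$ and $A \not\subseteq S$ are needed is also accurate.
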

\begin{proof} This follows easily from Lemma \ref{l:one}.
\end{proof}

\begin{lemma}\label{l:five} Let $A$ be a minimal ideal of $L$. Then the following are equivalent:
\begin{itemize}
\item[(i)] $A \not \subseteq S$ for some $S \in \Omega(U,L)_{min}$;
\item[(ii)] $A \not \subseteq M$ for some $M \in [U:L]_{max}$; and
\item[(iii)] for every $S \in \Omega(U,L)_{min}$ there is a complement of $A$ in $L$ containing $S$.
\end{itemize}
\end{lemma}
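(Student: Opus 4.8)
The plan is to establish the cycle of implications (i) $\Rightarrow$ (ii) $\Rightarrow$ (iii) $\Rightarrow$ (i), working throughout with $L$ finite-dimensional so that $\Omega(U,L)_{min}$ is non-empty: the set $\Omega(U,L)$ contains $L$ and hence, by a dimension argument, has minimal elements. Two facts about the minimal ideal $A$ will be used repeatedly. First, since $L$ is solvable, $A$ is abelian. Second, if $M$ is any maximal subalgebra of $L$ with $A \not\subseteq M$, then $M + A = L$ by maximality, while $A \cap M$ is an ideal of $L$ (here abelianness of $A$ is what makes $[L, A\cap M] \subseteq A \cap M$) properly contained in $A$; minimality then forces $A \cap M = 0$, so $L = A \oplus M$. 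In short, a maximal subalgebra omitting $A$ is automatically a complement of $A$.

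The implication (i) $\Rightarrow$ (ii) is immediate from Lemma \ref{l:four}: if $A \not\subseteq S$ for some $S \in \Omega(U,L)_{min}$, that lemma supplies $M \in [S:L]_{max}$ with $A \not\subseteq M$, and since $S \supseteq U$ we have $M \in [U:L]_{max}$. For (iii) $\Rightarrow$ (i) I would take any $S \in \Omega(U,L)_{min}$ together with a complement $N$ of $A$ containing it; then $A \cap S \subseteq A \cap N = 0$, so $A \neq 0$ gives $A \not\subseteq S$, which is (i).

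The substantial step is (ii) $\Rightarrow$ (iii). Using the opening observation, fix from (ii) a complement $M$ of $A$ with $U \subseteq M$, so $L = A \oplus M$. The key sub-claim is that $A \not\subseteq S$ for \emph{every} $S \in \Omega(U,L)_{min}$, and I would prove it by contradiction. Assume $A \subseteq S$ and set $S_0 = S \cap M$; then $U \subseteq S_0$ and, by the modular law, $S = A + S_0$ with $S_0 \subsetneq S$ (proper because $A \neq 0$ and $A \cap M = 0$). The map $T \mapsto T + A$ is the lattice isomorphism $[S_0 : M] \cong [S_0 + A : L] = [S:L]$ already invoked in Lemma \ref{l:three}, and it carries bottom to bottom and top to top; since $[S:L]$ is complemented, so is $[S_0 : M]$, giving $S_0 \in \Omega(U,M)$. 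By Lemma \ref{l:three} this yields $S_0 \in \Omega(U,L)$ with $S_0 \subsetneq S$, contradicting the minimality of $S$. Granted the sub-claim, every $S \in \Omega(U,L)_{min}$ satisfies $A \not\subseteq S$, so Lemma \ref{l:four} produces $M_S \in [S:L]_{max}$ with $A \not\subseteq M_S$; by the opening observation $M_S$ is a complement of $A$, and it contains $S$, which is exactly (iii).

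The main obstacle is precisely the sub-claim inside (ii) $\Rightarrow$ (iii): promoting ``some minimal $S$ omits $A$'' to ``every minimal $S$ omits $A$''. The difficulty is that no conjugacy theorem is yet available to pass between different minimal members of $\Omega(U,L)$. The device that replaces conjugacy is the transfer of complementedness across the lattice isomorphism $[S \cap M : M] \cong [S:L]$, which manufactures the strictly smaller element $S \cap M$ of $\Omega(U,L)$ and so contradicts minimality; once this is secured, Lemma \ref{l:four} delivers the required complements for free.
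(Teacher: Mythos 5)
Your proposal is correct and follows essentially the same route as the paper: (i)~$\Rightarrow$~(ii) and the final step of (ii)~$\Rightarrow$~(iii) via Lemma~\ref{l:four}, and the core of (ii)~$\Rightarrow$~(iii) by assuming $A \subseteq S$, passing to $S \cap M$ through the lattice isomorphism $[S\cap M:M] \cong [S:L]$, and invoking Lemma~\ref{l:three} to contradict minimality. The extra details you supply (why a maximal subalgebra omitting $A$ is automatically a complement, and why $\Omega(U,L)_{min}$ is non-empty for the trivial direction) are correct but are left implicit in the paper.
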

\begin{proof} $(i) \Rightarrow (ii)$: This follows from Lemma \ref{l:four}.

\noindent $(ii) \Rightarrow (iii)$: Suppose that $A \not \subseteq M$ for some $M \in [U:L]_{max}$. Then $L = A \oplus M$. Let $S \in \Omega(U,L)_{min}$.  
\par

Suppose first that $A \subseteq S$. Then $S = A \oplus M \cap S$ and $M \cap S \cong S/A$, so the interval $[S:L]$ is lattice isomorphic to $[M \cap S:M]$. It follows that $M \cap S \in \Omega(U,M)$. But Lemma \ref{l:three} now gives that $M \cap S \in \Omega(U,L)$, contradicting the minimality of $S$.
\par

Hence $A \not \subseteq S$ and Lemma \ref{l:four} gives a complement of $A$ containing $S$. 

\noindent $(iii) \Rightarrow (i)$: This is trivial.
\end{proof}

\begin{lemma}\label{l:six} If $A$ is an ideal of $L$ and $S \in \Omega(U,L)_{min}$ then $S + A \in \Omega(U+A,L)_{min}$.
\end{lemma}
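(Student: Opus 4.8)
The plan is to separate the two assertions hidden in the statement: membership, $S + A \in \Omega(U+A,L)$, and minimality of $S+A$ within $\Omega(U+A,L)$. Membership is immediate: by Lemma \ref{l:two} we already have $S + A \in \Omega(U,L)$, and since $U + A \subseteq S + A$, the fact that $[S+A:L]$ is complemented gives $S + A \in \Omega(U+A,L)$. All the work is therefore in the minimality, which I would prove by induction on $\dim A$, the heart of the matter being the case where $A$ is a minimal ideal of $L$.

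For the minimal-ideal case, suppose $T \in \Omega(U+A,L)$ with $U + A \subseteq T \subseteq S + A$. Since $A \subseteq T$, the modular law for subspaces gives $T = T \cap (S+A) = (T \cap S) + A$ (and this is a subalgebra, being the sum of a subalgebra with the ideal $A$); write $S_1 = S \cap T$, so that $U \subseteq S_1 \subseteq S$ and $T = S_1 + A$. The goal is to show $S_1 \in \Omega(U,L)$, for then minimality of $S$ forces $S_1 = S$, whence $S \subseteq T$ and $T = S + A$, as wanted. If $A \subseteq S$ this is trivial: then $S + A = S$ and $T \in \Omega(U+A,L) \subseteq \Omega(U,L)$ with $T \subseteq S$, so $T = S$ by minimality.

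The substantive case is $A \not\subseteq S$. Here condition (i) of Lemma \ref{l:five} holds for our $S$, so by (iii) there is a complement $M$ of $A$ in $L$ with $S \subseteq M$; thus $L = A \oplus M$ with $M$ maximal. Since $S_1 \subseteq S \subseteq M$ and $A \cap M = 0$, the sum $T = S_1 + A$ is direct and, by the modular law, $T \cap M = S_1$. Because $A \subseteq T$ and $L = A \oplus M$, intersecting with $M$ (with inverse $Y \mapsto Y + A$) sets up a lattice isomorphism between $[T:L]$ and $[S_1:M]$; as $[T:L]$ is complemented, so is $[S_1:M]$, i.e.\ $S_1 \in \Omega(U,M)$. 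Lemma \ref{l:three} then yields $S_1 \in \Omega(U,L)$, and minimality of $S$ gives $S_1 = S$, completing this case.

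Finally, for a general nonzero ideal $A$ I would choose a minimal ideal $A_0$ of $L$ with $A_0 \subseteq A$, apply the minimal-ideal case to obtain $S + A_0 \in \Omega(U+A_0,L)_{min}$, and then pass to $\bar L = L/A_0$. Writing $\bar A = A/A_0$, $\bar U = (U+A_0)/A_0$, $\bar S = (S+A_0)/A_0$, the standard lattice isomorphism between subalgebras of $L$ containing $A_0$ and subalgebras of $\bar L$ (which preserves complementation) shows $\bar S \in \Omega(\bar U, \bar L)_{min}$; since $\dim \bar A < \dim A$, the induction hypothesis applied in $\bar L$ gives $\bar S + \bar A \in \Omega(\bar U + \bar A, \bar L)_{min}$, and translating back gives $S + A \in \Omega(U+A,L)_{min}$. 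I expect the main obstacle to be the minimal-ideal case with $A \not\subseteq S$: getting the modular-law decomposition $T = S_1 + A$ and the lattice isomorphism $[T:L] \cong [S_1:M]$ to line up so that Lemma \ref{l:three} can be invoked, and checking throughout that the relevant sums with $A$ remain subalgebras.
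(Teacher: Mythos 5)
Your proof is correct and follows essentially the same route as the paper: reduce to the case of a minimal ideal $A$, dispose of $A \subseteq S$ trivially, and for $A \not\subseteq S$ use Lemma \ref{l:five} to find a complement $M \supseteq S$, transfer complementedness through the lattice isomorphism between $[T:L]$ and $[T \cap M : M]$, and invoke Lemma \ref{l:three} together with the minimality of $S$. The only (cosmetic) difference is that you test an arbitrary $T \in \Omega(U+A,L)$ below $S+A$, whereas the paper tests a minimal such element $C$; both yield the same conclusion.
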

\begin{proof} It suffices to show that $(S+A)/A \in \Omega((U+A)/A,L/A)_{min}$ and so we may suppose that $A$ is a minimal ideal of $L$. The result is clear if $A \subseteq S$, since then $U+A \subseteq S$. So suppose that $A \not \subseteq S$.
\par

Then there is a complement $M$ of $A$ in $L$ containing $S$, by Lemma \ref{l:five}, and $L = A \oplus M$. Moreover, $S + A \in \Omega(U+A,L)$. Choose $C \in \Omega(U+A,L)_{min}$ such that $C \subseteq S+A$. Then $U \subseteq M \cap C \subseteq S \subseteq M$ and the interval $[M \cap C:M]$ is lattice isomorphic to $[C:L]$. It follows that $M \cap C \in \Omega(U,M)$ and so $M \cap C \in \Omega(U,L)$, by Lemma \ref{l:three}. But $S \in \Omega(U,L)_{min}$, which yields that $M \cap C = S$; that is, $C = S + A$.
\end{proof}
\bigskip

At this point the theory starts to diverge from that for groups. We say that $L$ is {\em completely solvable} if $L^2$ is nilpotent. For these algebras $\Omega(U,L)_{min}$ takes on a particularly simple form.

\begin{theor}\label{t:ellsquared} Let $L$ be completely solvable and let $U$ be a subalgebra of $L$. Then $\Omega(U,L)_{min} = \{\phi(U,L)\}$. In particular, if $U = 0$ then $\Omega(U,L)_{min} = \{\phi(L)\}$.
\end{theor}
\begin{proof} Put $B = \Omega(U,L)_{min}$, $C = \phi(U,L)$. Then $\phi(U,L) \subseteq B$ and so $C \subseteq B$, by Lemma \ref{l:one}. We now use induction on the dimension of $L$. Suppose first that there is a minimal ideal $A$ of $L$ with $A \subseteq C$. Then $B/A \in \Omega((U+A)/A,L/A)_{min}$, by Lemma \ref{l:six}, and so $B/A = \phi((U+A)/A,L/A)$, by the inductive hypothesis. From this it is clear that $B = C$.
\par

So suppose now that no such minimal ideal exists. Then $L$ is $\phi$-free and so $L$ is complemented, by \cite[Theorem 1]{comp}. Thus there is a subalgebra $V$ such that $\langle C,V \rangle = L$ and $C \cap V = 0$. It follows that $\langle C, U+V \rangle = L$ and $C \cap (U+V) = U + C \cap V = U$, whence $C \in [U:L]$ and $[C:L]$ is complemented. Thus $C \in \Omega(U,L)$ and the minimality of $B$ yields that $B = C$.
\end{proof}
\bigskip

If $L$ is not completely solvable then $\Omega(U,L)_{min}$ can contain more than one element as we shall see in the next section. However, we do have a conjugacy result in some cases. First we need to consider inner automorphisms of $L$. Let $x \in L$ and let ad\,$x$ be the corresponding inner derivation of $L$. If $F$ has characteristic zero, suppose that (ad\,$x)^n = 0$ for some $n$; if $F$ has characteristic $p$, suppose that $x \in I$ where $I$ is a nilpotent ideal of $L$ of class less than $p$. Put
\[
\hbox{exp(ad\,}x) = \sum_{r=0}^{\infty} \frac{1}{r!}(\hbox{ad\,}x)^r.
\]
Then exp(ad\,$x)$ is an automorphism of $L$.
\par

If $B$ is a subalgebra of $L$, the {\em centraliser} of $B$ in $L$ is $C_L(B) = \{ x \in L : [x,B] = 0 \}$. We define the {\em nilpotent residual} to be $L^{\infty} = \bigcap_{i=1}^{\infty} L^i$, where $L^i$ are the terms of the lower central series for $L$. Then we have conjugacy for the following metanilpotent Lie algebras.

\begin{theor}\label{t:omegaminconj} Suppose that $L$ is a solvable Lie algebra over a field $F$ of characteristic $p$, and suppose further that $L^{\infty}$ has nilpotency class less than $p$. Let $U$ be a subalgebra of $L$. Then the elements of $\Omega(U,L)_{min}$ are conjugate under ${\mathcal I}(L:L^{\infty})$.
\end{theor}
\begin{proof} We use induction on the dimension of $L$. It is clearly true if $L$ has dimension one, so suppose it holds for such algebras with dimension smaller than that of $L$. We can assume that $L^{\infty} \neq 0$. Let $S_1, S_2 \in \Omega(U,L)_{min}$ and let $A$ be a minimal ideal of $L$ with $A \subseteq L^{\infty}$. Then $(S_1+A)/A, (S_2+A)/A \in \Omega((U+A)/A,L/A)_{min}$, by Lemma \ref{l:six}, and so $(S_1+A)/A$ and $(S_2+A)/A$ are conjugate under ${\mathcal I}(L/A:L^{\infty}/A)$, by the inductive hypothesis.
\par

If $A \subseteq S_1$ then $A \subseteq S_2$, by Lemma \ref{l:five}, and there is an $x \in L^{\infty}$ such that $S_1$ exp(ad\,$x) = S_2$; that is, $S_1$ and $S_2$ are conjugate under ${\mathcal I}(L:L^{\infty})$.
\par

So suppose that $A \not \subseteq S_1$. Then there are complements $M_1$ and $M_2$ of $A$ in $L$ with $S_1 \subseteq M_1$ and $S_2 \subseteq M_2$, by Lemma \ref{l:five}. Put $C = C_{M_1}(A)$, which is an ideal of $L$. If $C = 0$ then $C_L(A) = A$ and there is $a \in A$ such that $M_2$ exp(ad\,$a) = M_1$, by \cite[Theorem 1.1]{bn}, whence $S_2$ exp(ad\,$a) \subseteq M_2$ exp(ad\,$a) = M_1$.
\par

If $C \neq 0$, then $(S_1+C)/C$ and $(S_2+C)/C$ are conjugate under ${\mathcal I}(L/C:(L^{\infty} + C)/C)$, by the inductive hypothesis. It follows that there is an $x \in L^{\infty}$ such that $S_2$ exp(ad\,$x + C) \subseteq S_1 + C$ exp(ad\,$a) \subseteq M_1$, which gives $S_2$ exp(ad\,$x) \subseteq M_1$. Now $L = A \oplus M_1$, so $L^{\infty} \subseteq A \oplus M_1^{\infty}$. Moreover, $[A,L^{\infty}] = 0$ since $L^{\infty}$ is nilpotent, so $M_1^{\infty}$ is an ideal of $L$. Put $x = a + b$, where $a \in A$, $b \in M_1^{\infty}$. Then, for each $s_2 \in S_2$, we have $s_2 + s_2$ ad\,$x + \ldots + s_2$ (ad\,$x)^n \in M_1$, which gives $s_2 + s_2$ ad\,$a \in M_1$. Thus, again we have that $S_2$ exp(ad\,$a) \subseteq M_1$ for some $a \in A$.
\par

So $S_1, S_2$ exp(ad\,$a) \subseteq M_1$ for some $a \in A$. Now $U \subseteq S_1 \subseteq M_1$ and $U$ exp(ad\,$a) \subseteq S_2$ exp(ad\,$a) \subseteq M_1$, so, for each $u \in U$, $u + [a,u] \in M_1$ which gives $[a,u] \in A \cap M_1 = 0$; that is, $a \in C_L(U)$ and $U$ exp(ad\,$a) = U$. Thus $S_2$ exp(ad\,$a) \in \Omega(U$ exp(ad\,$a), L)_{min} = \Omega(U,L)_{min}$. But now Lemma \ref{l:three} yields that $S_1, S_2$ exp(ad\,$a) \in \Omega(U,M_1)_{min}$ and the required conjugacy of $S_1$ and $S_2$ follows from the inductive hypothesis. 
\end{proof}

\section{U-prefrattini subalgebras}
Let 
\begin{equation} 0 = A_0 \subset A_1 \subset \ldots \subset A_n = L  
\end{equation} be a fixed chief series for $L$. We say that $A_i/A_{i-1}$ is a {\em Frattini} chief factor if $A_i/A_{i-1} \subseteq \phi(L/A_{i-1})$; it is {\em complemented} if there is a maximal subalgebra $M$ of $L$ such that $L = A_i + M$ and $A_i \cap M = A_{i-1}$. When $L$ is solvable it is easy to see that a chief factor is Frattini if and only if it is not complemented. This can be generalised as follows.  

The factor algebra $A_i/A_{i-1}$ is called a {\em $U$-Frattini} chief factor if
\[ A_i \subseteq \phi(U+A_{i-1},L) \hbox{ or if } U + A_{i-1} = L.
\]
If $A_i/A_{i-1}$ is not a $U$-Frattini chief factor there is an $M \in [U+A_{i-1}:L]_{max}$ for which $A_i \not \subseteq M$; that is, $M$ is a complement of the chief factor $A_i/A_{i-1}$.  We have a sharpened form of the Jordan-H\"older Theorem in which $U$-Frattini chief factors correspond. First we need a lemma.

\begin{lemma}\label{l:corr} Let $A_1$, $A_2$ be distinct minimal ideals of the solvable Lie algebra $L$. Then there is a bijection 
\[ \theta: \{ A_1, (A_1 + A_2)/A_1 \} \rightarrow \{ A_2, (A_1 + A_2)/A_2 \}
\]
such that corresponding chief factors have the same dimension and $U$-Frattini chief factors correspond to one another.
\end{lemma}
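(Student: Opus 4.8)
The plan is first to pin down the structure of the two minimal ideals. Since $A_1$ and $A_2$ are distinct minimal ideals, $A_1 \cap A_2$ is an ideal properly contained in each, so $A_1 \cap A_2 = 0$ and $N := A_1 + A_2 = A_1 \oplus A_2$. As $L$ is solvable these ideals are abelian, hence irreducible as $L$-modules under the adjoint action, and the two projections furnish $L$-module isomorphisms $N/A_2 \cong A_1$ and $N/A_1 \cong A_2$; in particular $\dim A_1 = \dim(N/A_2)$ and $\dim A_2 = \dim(N/A_1)$. There are exactly two candidate bijections: $\theta_1$ sending $A_1 \mapsto N/A_2$ and $N/A_1 \mapsto A_2$, which always preserves dimension, and $\theta_2$ sending $A_1 \mapsto A_2$ and $N/A_1 \mapsto N/A_2$, which preserves dimension precisely when $\dim A_1 = \dim A_2$. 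The whole content is to show that one of these also matches up the $U$-Frattini factors.

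Next I would reformulate ``$U$-Frattini'' for each of the four factors in terms of complements containing $U$. Apart from the trivial case $U = L$, a maximal subalgebra $M \supseteq U$ that fails to contain a minimal ideal $A_i$ complements it, by the maximality argument used in the proof of Lemma \ref{l:three}. Classifying the members of $[U:L]_{max}$ according to whether they contain $A_1$ and/or $A_2$, I write $T_{00}$, $T_{10}$, $T_{01}$ for the statements that some such $M$ contains neither, only $A_1$, only $A_2$, respectively. Then $A_1$ is non-$U$-Frattini iff $T_{01} \vee T_{00}$, and $A_2$ is non-$U$-Frattini iff $T_{10} \vee T_{00}$, while $N/A_1$ is non-$U$-Frattini iff $T_{10}$ and $N/A_2$ is non-$U$-Frattini iff $T_{01}$. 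The last two use that $\phi(U+A_1,L) \supseteq A_1$, so that $N \subseteq \phi(U+A_1,L)$ iff $A_2 \subseteq \phi(U+A_1,L)$, and similarly with the indices exchanged.

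The crux is the following transfer lemma: $T_{00}$ and $T_{01}$ together imply $T_{10}$ (and, symmetrically, $T_{00}$ and $T_{10}$ imply $T_{01}$). To prove it, take a diagonal complement $M_0 \supseteq U$ witnessing $T_{00}$, so that $L = A_2 \oplus M_0$ with $A_1 \cap M_0 = 0$, together with a complement $Q \supseteq U$ of $A_1$ containing $A_2$ witnessing $T_{01}$. Put $K = Q \cap M_0$. Since $Q + M_0 = L$, a dimension count gives $\dim K = \dim L - \dim N$; moreover $Q \cap N = A_2$ and $M_0 \cap N = D$ is a diagonal, so $K \cap N = A_2 \cap D = 0$, and hence $K$ is a complement of $N$ in $L$ containing $U$. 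Then $K + A_1$ is a subalgebra (as $A_1$ is an ideal) with $(K+A_1) \cap A_2 = 0$ and $(K+A_1) + A_2 = K + N = L$; being a complement of the minimal ideal $A_2$ it is maximal, and it contains $A_1$ and $U$, which is exactly a witness for $T_{10}$. This is the step I expect to be the main obstacle, since it is where the diagonal (isotypic) interaction between $A_1$ and $A_2$ must be resolved; the construction of $K$ and the passage to $K+A_1$ is precisely what makes the roles of $A_1$ and $A_2$ interchangeable.

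Finally I would assemble the bijection. The transfer lemma shows that whenever $T_{00}$ holds one has $T_{10} \Leftrightarrow T_{01}$, so the admissible triples $(T_{00},T_{10},T_{01})$ are only $(1,0,0)$, $(1,1,1)$, $(0,0,0)$, $(0,1,0)$, $(0,0,1)$, $(0,1,1)$. Translating each into the $U$-Frattini pattern of $(A_1, A_2, N/A_1, N/A_2)$ via the equivalences above, one checks directly that $\theta_1$ matches the non-$U$-Frattini factors in every case, with the single exception of the triple $(1,0,0)$, where $A_1$ and $A_2$ are both non-$U$-Frattini while $N/A_1$ and $N/A_2$ are both $U$-Frattini. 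In that case $T_{00}$ holds, and a diagonal complement yields a nonzero $L$-module homomorphism $A_1 \to A_2$, hence an isomorphism since both are irreducible, so $\dim A_1 = \dim A_2$; there $\theta_2$ is dimension-preserving and manifestly matches the factors. Thus in all cases the chosen bijection has the two required properties.
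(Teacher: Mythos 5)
Your proof is correct, and it reaches the result by a genuinely different route from the paper's. The paper argues by direct case analysis on which of the four factors are $U$-Frattini: it first shows that if $A_1$ is $U$-Frattini then so is $A/A_2$ (and if $A/A_1$ is too then all four are), and then handles the one delicate configuration ($A_1$, $A_2$ non-$U$-Frattini but $A/A_2$ $U$-Frattini) by intersecting a single maximal subalgebra complementing $A_1$ with $A=A_1\oplus A_2$ to obtain a diagonal ideal $A_3$, from which it reads off the dimension equality and then shows, via a computation with $\phi(U,L)\cap A$, that $A/A_1$ is also $U$-Frattini. You instead encode everything in the three existence predicates $T_{00}, T_{10}, T_{01}$, observe that the $U$-Frattini status of all four factors is a Boolean function of these, and reduce the whole lemma to the single transfer implication $T_{00}\wedge T_{01}\Rightarrow T_{10}$, proved constructively by intersecting two maximal subalgebras ($K=Q\cap M_0$) and passing to $K+A_1$. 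Your transfer implication is logically equivalent to the paper's key step (its contrapositive is exactly what the paper establishes in its ``remaining case''), but your proof of it is different and arguably cleaner: it replaces the paper's somewhat compressed claim that $A\subseteq\phi(U+A_2,L)$ forces $A\subseteq\phi(U,L)+A_2$ with an explicit construction of the required maximal subalgebra, and it makes the exhaustive case check mechanical. Two small points to spell out in a final write-up: $Q+M_0=L$ follows at once from $A_2\subseteq Q$ together with $L=A_2\oplus M_0$; and the maximality of $K+A_1$, as a complement of the minimal ideal $A_2$, rests on solvability exactly as in the proof of Lemma \ref{l:three}.
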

\begin{proof} Clearly we can assume that $U \neq L$. Put $A = A_1 \oplus A_2$. Suppose first that $A_1$ is a $U$-Frattini chief factor. Then $A_1 \subseteq \phi(U,L)$. Thus $A \subseteq \phi(U + A_2, L)$ and $A/A_2$ is a $U$-Frattini chief factor. If $A/A_1$ is also a $U$-Frattini chief factor, then $A \subseteq \phi(U + A_1, L)$, which yields that $A \subseteq \phi(U,L)$, and all four factors are $U$-Frattini. In this case we can choose $\theta$ so that $\theta(A_1) = A/A_2$ and $\theta(A/A_1) = A_2$. If $A/A_1$ is not a $U$-Frattini chief factor, then nor is $A_2$, by the same argument as above, and so the same choice of $\theta$ suffices; likewise if none of the factors are $U$-Frattini chief factors.
\par

The remaining case is where $A_1$ and $A_2$ are not $U$-Frattini chief factors but $A/A_2$ is. Then $A_1 \not \subseteq \phi(U,L)$, $A_2 \not \subseteq \phi(U,L)$ and either $A \subseteq \phi(U + A_2, L)$ or $U + A_2 = L$. Thus there exists $M \in [U,L]_{max}$ such that $A_1 \not \subseteq M$, giving $L = A_1 \oplus M$. Put $A_3 = M \cap A$. Then $A_3 \oplus A_1 = M \cap A \oplus A_1 = (M + A_1) \cap A = A$, and so $A_3 \cong A/A_1 \cong A_2$. If $A_3 = A_2$ then $U + A_2 \subseteq M$ which gives $A \subseteq M$: a contradiction. Hence $A_3 \neq A_2$, $A = A_3 \oplus A_2$ and $A_3 \cong A/A_2 \cong A_1$. It follows that all of the chief factors have the same dimension.
\par

If $U + A_1 = L$, then $A/A_1$ is a $U$-Frattini chief factor, so we can choose $\theta$ so that $\theta(A_1) = A_2$ and $\theta(A/A_1) = A/A_2$. If $U + A_1 \neq L$, let $N \in [U + A_1, L]_{max}$. If $A_2 \not \subseteq N$ then $L = A_2 \oplus N$ and $N \cap A = A_1$. But $A \subseteq \phi(U + A_2, L)$ implies that $A \subseteq \phi(U,L) + A_2$, whence $A + A_2 + \phi(U,L) \cap A$. It follows that $\phi(U,L) \cap A \subseteq N \cap A = A_1$, giving $\phi(U,L) \cap A = A_1$. But now $A_1 \subseteq \phi(U,L)$: a contradiction. We must, therefore, have $A_2 \subseteq N$ and so $A \subseteq N$. Thus $A \subseteq \phi(U + A_1, L)$; that is, $A/A_1$ is a $U$-Frattini chief factor. In this case we can again choose $\theta$ so that $\theta(A_1) = A_2$ and $\theta(A/A_1) = A/A_2$.    
\end{proof}

\begin{theor}\label{t:jordan} Let
\[ 0 < A_1 < \ldots < A_n = L  \hspace{1in} (1)
\]
\[ 0 < B_1 < \ldots < B_n = L \hspace{1in} (2)
\]
be chief series for the solvable Lie algebra $L$. Then there is a bijection between the chief factors of these two series such that corresponding factors have the same dimension and such that the $U$-Frattini chief factors in the two series correspond. 
\end{theor}
\begin{proof} These two series have the same length by a version of the Jordan H\"older Theorem. We induction on $n$. The result is clearly true if $n = 1$. So let $n > 1$ and suppose that the result holds for all solvable Lie algebras with chief series of length $\leq n-1$. If $A_1 = B_1$, then applying the inductive hypothesis to $L/A_1$ gives a suitable bijection between the factors above $A_1$, and then we can map $A_1$ to $B_1$ and we have the result.
\par

So suppose that $A_1$ and $B_1$ are distinct and put $A = A_1 \oplus B_1$. Then $A/A_1$ and $A/B_1$ are chief factors of $L$ and there are chief series of the form
\[ 0 < A_1 < A < C_3 < \ldots < C_n = L  \hspace{1in} (3)
\]
\[ 0 < B_1 < A < C_3 < \ldots < C_n = L \hspace{1in} (4)
\]
Define an equivalence relation on the chief series of $L$ by saying that two such series are equivalent if there is a bijection between their chief factors satisfying the requirements of the theorem. Since series $(1)$ and $(3)$ have a minimal ideal in common, they are equivalent. Similarly, sereis $(2)$ and $(4)$ are equivalent. Moreover, since series $(3)$ and $(4)$ coincide above $A$ they are also equivalent, by Lemma \ref{l:corr}. Hence the series $(1)$ and $(2)$ are equivalent, as required.
\end{proof}
\bigskip

We define the set $\mathcal{I}$ by $i \in \mathcal{I}$ if and only if $A_i/A_{i-1}$ is not a $U$-Frattini chief factor of $L$. For each $i \in \mathcal{I}$ put
\[ \mathcal{M}_i = \{ M \in [U + A_{i-1}, L]_{max} \colon A_i \not \subseteq M\}.
\]
Then $B$ is a {\em $U$-prefrattini} subalgebra of $L$ if 
\[ B = \bigcap_{i \in \mathcal{I}} M_i \hbox{ for some } M_i \in \mathcal{M}_i.
\]
If $U = 0$ we will refer to $B$ simply as a {\em prefrattini} subalgebra of $L$.
\par

The subalgebra $B$ {\em avoids} $A_i/A_{i-1}$ if $B \cap A_i = B \cap A_{i-1}$; likewise, $B$ {\em covers} $A_i/A_{i-1}$ if $B + A_i = B + A_{i-1}$. Then we have the following important property of $U$-prefrattini subalgebras of $L$. 

\begin{lemma}\label{l:cover} If $B$ is a $U$-prefrattini subalgebra of $L$ then it covers all $U$-Frattini chief factors of $L$ in $(1)$ and avoids the rest.
\end{lemma}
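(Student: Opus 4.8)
The plan is to prove the two assertions---covering of the $U$-Frattini factors ($i \notin \mathcal{I}$) and avoidance of the remaining factors ($i \in \mathcal{I}$)---simultaneously, by a dimension count built from two opposing inequalities that are forced to coincide. I would start from the elementary identity, valid for any subalgebra $B$ and any $i$,
\[ \dim \frac{B \cap A_i}{B \cap A_{i-1}} + \dim \frac{B + A_i}{B + A_{i-1}} = \dim \frac{A_i}{A_{i-1}}, \]
so that $B$ avoids $A_i/A_{i-1}$ exactly when the first summand vanishes and covers it exactly when the second summand vanishes. Telescoping the filtration $0 = B \cap A_0 \subseteq \cdots \subseteq B \cap A_n = B$ gives $\dim B = \sum_{i=1}^n \dim\frac{B \cap A_i}{B\cap A_{i-1}}$, which is the bookkeeping device tying the two parts together.

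The avoidance half is immediate. For each $i \in \mathcal{I}$ the chosen factor $M_i$ lies in $\mathcal{M}_i$, hence $M_i$ is a maximal subalgebra of $L$ containing $U + A_{i-1}$ with $A_i \not\subseteq M_i$. Passing to $L/A_{i-1}$, where $A_i/A_{i-1}$ is a minimal ideal of the solvable algebra and $M_i/A_{i-1}$ a maximal subalgebra not containing it, the standard solvable minimal-ideal argument gives $A_i \cap M_i = A_{i-1}$ and $L = A_i + M_i$; in particular the codimension of $M_i$ in $L$ equals $\dim(A_i/A_{i-1})$. Since $B \subseteq M_i$ we get $B \cap A_i \subseteq M_i \cap A_i = A_{i-1}$, and so $B \cap A_i = B \cap A_{i-1}$. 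Thus $B$ avoids every factor with $i \in \mathcal{I}$, which already annihilates those terms in the telescoping sum and yields
\[ \dim B = \sum_{i \notin \mathcal{I}} \dim \frac{B \cap A_i}{B \cap A_{i-1}} \le \sum_{i \notin \mathcal{I}} \dim \frac{A_i}{A_{i-1}}. \]

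For the reverse inequality I would exploit that $B = \bigcap_{i \in \mathcal{I}} M_i$ is an intersection of subspaces, so codimensions are subadditive: the codimension of $B$ in $L$ is at most $\sum_{i \in \mathcal{I}} \dim(A_i/A_{i-1})$ by the computation just made. Subtracting from $\dim L = \sum_{i} \dim(A_i/A_{i-1})$ gives $\dim B \ge \sum_{i \notin \mathcal{I}} \dim(A_i/A_{i-1})$. Combined with the previous display this forces equality throughout, whence $\dim\frac{B\cap A_i}{B\cap A_{i-1}} = \dim\frac{A_i}{A_{i-1}}$ for every $i \notin \mathcal{I}$; by the identity above the second summand then vanishes there, i.e.\ $B$ covers each $U$-Frattini chief factor. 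Together with the avoidance established above, this is exactly the claim.

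I expect the only genuine obstacle to be the codimension computation for the $M_i$, which rests on checking $A_i \cap M_i = A_{i-1}$ in the quotient $L/A_{i-1}$; the subadditivity of codimension is pure linear algebra and everything else is bookkeeping. The essential point is that the avoidance bound and the intersection bound push $\dim B$ in opposite directions, so that the two estimates squeeze it to a single value---and it is precisely the solvability of $L$, via the fact that a minimal ideal meets a non-containing maximal subalgebra trivially, that makes both bounds available.
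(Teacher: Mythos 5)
Your proof is correct, but the covering half takes a genuinely different route from the paper's. The avoidance half is essentially the paper's own argument: for $i \in \mathcal{I}$ one has $B \subseteq M_i$ and $M_i \cap A_i = A_{i-1}$, whence $B \cap A_i = B \cap A_{i-1}$. For covering, the paper argues structurally from the definition of a $U$-Frattini factor: either $U + A_{i-1} = L$, or $A_i \subseteq \phi(U+A_{i-1},L)$ so that every maximal subalgebra containing $U+A_{i-1}$ contains $A_i$, and covering follows. You instead squeeze $\dim B$ between the upper bound $\sum_{i \notin \mathcal{I}} \dim(A_i/A_{i-1})$ forced by avoidance and the matching lower bound obtained from subadditivity of codimension applied to $B = \bigcap_{i \in \mathcal{I}} M_i$, each $M_i$ having codimension $\dim(A_i/A_{i-1})$ since $L = A_i + M_i$ and $A_i \cap M_i = A_{i-1}$; termwise equality in the telescoped sum then kills $\dim\bigl((B+A_i)/(B+A_{i-1})\bigr)$ for $i \notin \mathcal{I}$. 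Both arguments work, but yours buys something extra: it delivers the dimension formula of Lemma \ref{l:dim} as an immediate byproduct, with no induction on $\dim L$, and it avoids having to decide which of the individual $M_j$ contain $A_i$ (for $j \in \mathcal{I}$ with $j < i$ they cannot, so the correct covering conclusion is $A_i \subseteq B + A_{i-1}$ rather than the literal $A_i \subseteq B$ asserted in the paper's proof; your global dimension count sidesteps this entirely). The trade-off is that the paper's argument explains \emph{why} a $U$-Frattini factor is covered, whereas yours establishes it only through the numerical coincidence of two bounds.
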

\begin{proof} Let $B$ be a $U$-prefrattini subalgebra of $L$ and let $A_i/A_{i-1}$ be a chief factor of $L$. If it is a $U$-Frattini chief factor then either $A_i \subseteq \phi(U+A_{i-1},L)$ or else $U+A_{i-1} = L$. In the former case, every maximal subalgebra of $L$ that contains $U+A_{i-1}$ also contains $A_i$, and so $A_i \subseteq B$. In either case, therefore, $B$ covers $A_i/A_{i-1}$. If it is not a $U$-Frattini chief factor we have $B \subseteq M_i$ where $L = A_i + M_i$ and $A_i \cap M_i = A_{i-1}$. Hence $B \cap A_i = B \cap M_i \cap A_i = B \cap A_i-1 \subseteq B \cap A_i$, and so $B$ avoids $A_i/A_{i-1}$. 
\end{proof} 
\bigskip

The next four results are dedicated to showing how the $U$-prefrattini subalgebras relate to the material in the previous section. The first lemma is helpful when trying to calculate $U$-prefrattini subalgebras.

\begin{lemma}\label{l:dim} Let $B$ be a $U$-prefrattini subalgebra of $L$. Then 
\[ \dim B = \sum_{i \notin {\mathcal I}}(\dim A_i - \dim A_{i-1});
\]
in particular, all $U$-prefrattini subalgebras of $L$ have the same dimension.
\end{lemma}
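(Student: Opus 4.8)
The plan is to compute $\dim B$ by telescoping along the fixed chief series and reading off each step from the cover/avoidance dichotomy supplied by Lemma \ref{l:cover}. First I would form the chain $0 = B \cap A_0 \subseteq B \cap A_1 \subseteq \cdots \subseteq B \cap A_n = B$ and write
\[ \dim B = \sum_{i=1}^n \bigl( \dim (B \cap A_i) - \dim(B \cap A_{i-1}) \bigr), \]
so that the whole computation reduces to identifying each summand.

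Next, for each $i$ I would rewrite the summand using the second isomorphism theorem: since $(B \cap A_i) \cap A_{i-1} = B \cap A_{i-1}$, we have
\[ \dim(B \cap A_i) - \dim(B \cap A_{i-1}) = \dim \frac{(B \cap A_i) + A_{i-1}}{A_{i-1}}, \]
which is exactly the dimension of the image of $B \cap A_i$ inside the chief factor $A_i / A_{i-1}$. I would then invoke Lemma \ref{l:cover}. If $i \in \mathcal{I}$ (a non-$U$-Frattini factor), $B$ avoids $A_i/A_{i-1}$, so $B \cap A_i = B \cap A_{i-1}$ and the summand vanishes. If $i \notin \mathcal{I}$, so that $B$ covers the factor, then $A_i \subseteq B + A_{i-1}$; writing any $a \in A_i$ as $a = b + c$ with $b \in B$ and $c \in A_{i-1}$ forces $b = a - c \in B \cap A_i$, whence $(B \cap A_i) + A_{i-1} = A_i$ and the summand equals $\dim A_i - \dim A_{i-1}$. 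Substituting back gives the claimed formula $\dim B = \sum_{i \notin \mathcal{I}}(\dim A_i - \dim A_{i-1})$.

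Finally, the \emph{in particular} clause is immediate: the right-hand side depends only on the fixed chief series and on the set $\mathcal{I}$, both of which are determined by $L$ and $U$ alone and not by the particular choices of $M_i \in \mathcal{M}_i$ used to build $B$; hence every $U$-prefrattini subalgebra of $L$ has this same dimension.

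I expect no serious obstacle here, since Lemma \ref{l:cover} already carries the conceptual weight. The only point requiring genuine care is verifying that covering yields the \emph{full} chief-factor dimension, i.e. the short module-theoretic identity $(B \cap A_i) + A_{i-1} = A_i$, which is the one place where the covering hypothesis must actually be used rather than merely the avoidance of the complementary factors.
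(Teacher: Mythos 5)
Your proof is correct, and it takes a mildly but genuinely different route from the paper's. The paper proves the dimension formula by induction on $\dim L$: it asserts that $(B+A_1)/A_1$ is a $((U+A_1)/A_1)$-prefrattini subalgebra of $L/A_1$, applies the inductive hypothesis there, and then uses the cover/avoid dichotomy only for the bottom factor $A_1/A_0$ to relate $\dim B$ to $\dim((B+A_1)/A_1)$. You instead telescope directly along the chain $B \cap A_0 \subseteq B \cap A_1 \subseteq \cdots \subseteq B \cap A_n = B$ and evaluate every increment at once via the second isomorphism theorem, using Lemma \ref{l:cover} for all factors simultaneously. Both arguments rest on the same conceptual input (covering of $U$-Frattini factors, avoidance of the rest), but yours is self-contained in a way the paper's is not: it sidesteps entirely the auxiliary claim, left as ``easy to check'' in the paper, that the image of $B$ in $L/A_1$ is again a prefrattini subalgebra of the quotient. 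The one step that needed care --- that covering forces $(B \cap A_i) + A_{i-1} = A_i$ and hence contributes the full factor dimension, whereas avoidance contributes zero --- you carry out correctly, and the ``in particular'' clause follows as you say since the right-hand side is independent of the choices $M_i \in \mathcal{M}_i$.
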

\begin{proof} We use induction on $\dim L$. The result is clear if $L$ is abelian, so suppose it holds for Lie algebras of smaller dimension than $L$. It is easy to check that $(B + A_1)/A_1$ is a $((U + A_1)/A_1)$-prefrattini subalgebra of $L/A_1$ and so 
\[ \dim \left( \frac{B+A_1}{A_1} \right) = \sum_{i \in I, i \neq 1}(\dim A_i - \dim A_{i-1}),
\]
by the inductive hypothesis. If $A_1/A_0$ is a $U$-Frattini chief factor of $L$, then $B$ covers $A_1/A_0$, whence $B = B+A_1$ and 
\[ \dim B = \dim A_1 + \dim \left( \frac{B+A_1}{A_1} \right) = \sum_{i \in I}(\dim A_i - \dim A_{i-1}).
\]
If $A_1/A_0$ is not a $U$-Frattini chief factor of $L$, then $B$ avoids $A_1/A_0$, whence $B \cap A_1 = 0$ and 
\[ \dim B = \dim \left( \frac{B+A_1}{A_1} \right) = \sum_{i \in I}(\dim A_i - \dim A_{i-1}).
\]
\end{proof}
\bigskip

Let $\Pi(U,L)$ be the set of $U$-prefrattini subalgebras of $L$.

\begin{lemma}\label{l:preomega} $\Pi(U,L) \subseteq \Omega(U,L)$.
\end{lemma}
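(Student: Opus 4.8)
The plan is to verify the two conditions for membership in $\Omega(U,L)$ and to obtain the harder one by induction on $\dim L$. Membership in $[U:L]$ is immediate: each factor $M_i$ in the defining intersection $B=\bigcap_{i\in\mathcal{I}}M_i$ lies in $[U+A_{i-1}:L]_{max}$ and so contains $U$, whence $U\subseteq B$. The real content is that $[B:L]$ is complemented, and here I would induct on $\dim L$, taking $\dim L\le 1$ (or, more generously, $L$ abelian) as the trivial base case and reducing through the first minimal ideal $A_1$ of the fixed chief series. Throughout, the inductive hypothesis should be read as the assertion ``$\Pi\subseteq\Omega$ for every subalgebra and every chief series'', since the series I use on the smaller algebras below are not the fixed one.

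The inductive step splits according to whether the chief factor $A_1/A_0$ is $U$-Frattini, mirroring the proof of Lemma \ref{l:dim}. If $A_1/A_0$ is $U$-Frattini, then $B$ covers it by Lemma \ref{l:cover}, so $A_1\subseteq B$ and $B/A_1=(B+A_1)/A_1$ is a $((U+A_1)/A_1)$-prefrattini subalgebra of $L/A_1$, as already noted in the proof of Lemma \ref{l:dim}. Applying the inductive hypothesis to $L/A_1$ gives $B/A_1\in\Omega((U+A_1)/A_1,L/A_1)$, so $[B/A_1:L/A_1]$ is complemented; as this interval is lattice isomorphic to $[B:L]$, the latter is complemented and $B\in\Omega(U,L)$.

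If $A_1/A_0$ is not $U$-Frattini, then $1\in\mathcal{I}$ and I would single out the factor $M_1\in\mathcal{M}_1$ appearing in $B=\bigcap_{i\in\mathcal{I}}M_i$, so that $B\subseteq M_1$ and $A_1\not\subseteq M_1$. Since $A_1$ is a minimal ideal and $M_1$ is maximal with $U\subseteq U+A_0\subseteq M_1$, we have $L=A_1\oplus M_1$ with $U\subseteq M_1$, which puts us in the situation of Lemma \ref{l:three}. The crucial step is to recognise $B$ as a $U$-prefrattini subalgebra of $M_1$: the projection $M_1\to L/A_1$ is an isomorphism carrying $U$ to $(U+A_1)/A_1$, carrying the chief series $\{A_i\cap M_1\}$ of $M_1$ (note $A_i=A_1\oplus(A_i\cap M_1)$ by modularity) to the quotient of the fixed series, and carrying $B$ to $(B+A_1)/A_1$. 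Since being $U$-prefrattini is defined entirely in terms of the subalgebra lattice, the maximal subalgebras and the chief series, all of which transport across an isomorphism, the fact that $(B+A_1)/A_1$ is $((U+A_1)/A_1)$-prefrattini in $L/A_1$ forces $B$ to be $U$-prefrattini in $M_1$. The inductive hypothesis then gives $B\in\Omega(U,M_1)$, and Lemma \ref{l:three} (with $A=A_1$, $M=M_1$) identifies $\Omega(U,M_1)$ with $\{S\in\Omega(U,L):S\subseteq M_1\}$, yielding $B\in\Omega(U,L)$.

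I expect the main obstacle to be this last identification in the non-Frattini case: showing that $B$ is genuinely a $U$-prefrattini subalgebra of $M_1$, and not merely of the correct dimension by Lemma \ref{l:dim}. The safest route is the isomorphism-invariance argument above, which avoids re-checking by hand which chief factors of $M_1$ are $U$-Frattini; the one point that must be stated carefully is that the inductive hypothesis is quantified over all chief series, so that it may be applied to the induced series $\{A_i\cap M_1\}$ of $M_1$ rather than to a pre-chosen one.
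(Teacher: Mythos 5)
Your proof is correct and follows the paper's skeleton: induction on $\dim L$, a case split on whether $A_1/A_0$ is a $U$-Frattini chief factor, and an appeal to Lemma \ref{l:three} via a complement $M_1$ of $A_1$ in the non-Frattini case. The one place you diverge is in how you certify $B \in \Omega(U,M_1)$. The paper first observes that $(B+A_1)/A_1 \in \Pi((U+A_1)/A_1, L/A_1)$, applies the inductive hypothesis to $L/A_1$ to conclude that $[B+A_1:L]$ is complemented, and then in the non-Frattini case simply transports this across the lattice isomorphism $[B+A_1:L] \cong [B:M_1]$; no prefrattini structure on $M_1$ is ever invoked. You instead transport the prefrattini property itself along the isomorphism $M_1 \cong L/A_1$ to get $B \in \Pi(U,M_1)$ and then apply the inductive hypothesis a second time, to $M_1$. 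Both routes work, and both ultimately rest on the same ``easy to check'' fact that $(B+A_1)/A_1$ is prefrattini in $L/A_1$ (and hence on an inductive hypothesis quantified over all chief series, which you rightly flag, since independence of the series is only established later in Theorem \ref{t:prefrat}). The paper's version is slightly leaner: the lattice isomorphism of intervals carries complementedness directly, sparing you the verification that $\{A_i \cap M_1\}$ is a chief series of $M_1$ and that $B$ is an intersection of the right maximal subalgebras of $M_1$.
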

\begin{proof} (i) We use induction on $\dim L$. The result is clear if $L$ is abelian, so suppose it holds for Lie algebras of dimension less than that of $L$. Let $B \in \Pi(U,L)$. Then  
\[ \frac{B + A_1}{A_1} \in \Pi \left(\frac{U+A_1}{A_1},\frac{L}{A_1}\right) \subseteq \Omega \left(\frac{U+A_1}{A_1},\frac{L}{A_1} \right), 
\]
whence $B + A_1 \in \Omega(U,L)$. If $A_1 \subseteq B$ we have $B \in \Omega(U,L)$. So suppose that $A_1 \not \subseteq B$. Then $B$ does not cover $A_1/A_0$, so $A_1/A_0$ is not a $U$-Frattini chief factor of $L$. It follows that $1 \in {\mathcal I}$, and so there is a maximal subalgebra $M$ of $L$ with $B \subseteq M$ and $A_1 \not \subseteq M$. But now $L = A_1 \oplus M$ and the intervals $[B+A_1:L]$ and $[B:M]$ are lattice isomorphic, which yields that $[B:M]$ is complemented. It follows from Lemma \ref{l:three} that $B \in \Omega(U,L)$ again. 
\end{proof}

\begin{lemma}\label{l:omegapre} $\Omega(U,L)_{min} \subseteq \Pi(U,L)$.
\end{lemma}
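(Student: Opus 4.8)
The plan is to argue by induction on $\dim L$, the result being clear when $L$ is abelian (and trivially so when $U = L$, where both sides reduce to $\{L\}$). Fix the chief series $(1)$, let $A_1$ be its bottom minimal ideal, and let $S \in \Omega(U,L)_{min}$. I would split the argument according to whether or not $A_1/A_0$ is a $U$-Frattini chief factor, i.e. whether $1 \notin \mathcal{I}$ or $1 \in \mathcal{I}$. In both cases the strategy is identical: pass to a strictly smaller algebra on which the inductive hypothesis supplies a prefrattini description of (an image of) $S$, and then lift the defining intersection $\bigcap M_i$ back to $L$, checking that the index set $\mathcal{I}$ and the families $\mathcal{M}_i$ transfer correctly.

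Suppose first that $A_1/A_0$ is $U$-Frattini, so $1 \notin \mathcal{I}$ and $A_1 \subseteq \phi(U,L) \subseteq S$ (the second containment being the remark after Lemma \ref{l:one}); thus $S = S + A_1$. By Lemma \ref{l:six}, $S/A_1 \in \Omega((U+A_1)/A_1, L/A_1)_{min}$, and the inductive hypothesis applied to $L/A_1$ with the induced chief series $0 \subset A_2/A_1 \subset \cdots$ gives $S/A_1 \in \Pi((U+A_1)/A_1, L/A_1)$. Here I would record the routine correspondences: for $i \ge 2$ every $M \in [U+A_{i-1}:L]_{max}$ contains $A_1$, so the $\mathcal{M}_i$ match those computed in $L/A_1$, and $A_i/A_{i-1}$ is $U$-Frattini in $L$ precisely when its image is $(U+A_1)/A_1$-Frattini in $L/A_1$; since $1 \notin \mathcal{I}$ the two index sets coincide. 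Writing $S/A_1 = \bigcap_{i\in\mathcal{I}}(M_i/A_1)$ and taking preimages yields $S = \bigcap_{i\in\mathcal{I}} M_i \in \Pi(U,L)$.

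Now suppose $A_1/A_0$ is not $U$-Frattini, so $1 \in \mathcal{I}$ and $\mathcal{M}_1 \neq \emptyset$; hence hypothesis (ii) of Lemma \ref{l:five} holds, and by (iii) there is a complement $M$ of $A_1$ with $S \subseteq M$, giving $A_1 \cap S \subseteq A_1 \cap M = 0$. The final clause of Lemma \ref{l:three} then gives $S \in \Omega(U,M)_{min}$, and as $\dim M < \dim L$ the inductive hypothesis, taken relative to the chief series $C_j = M \cap A_{j+1}$ of $M$, gives $S = \bigcap_j N_j$ with $N_j \in [U+C_{j-1}:M]_{max}$ and $C_j \not\subseteq N_j$. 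I would then set $M_1 := M$ and lift each $N_j$ to $M_i := N_j + A_1$ (with $i = j+1$). The verifications are that $M_i$ is a subalgebra ($A_1$ being an ideal) and is maximal in $L$ (its image is maximal in $L/A_1 \cong M$); that $M_i \supseteq U + A_{i-1} = U + A_1 + C_{j-1}$; and that $A_i \not\subseteq M_i$, since otherwise $C_j = A_i \cap M \subseteq (N_j + A_1) \cap M = N_j$ by the modular law, contradicting $C_j \not\subseteq N_j$. Thus $M_i \in \mathcal{M}_i$, and $M \cap (N_j + A_1) = N_j$ again by modularity, so $\bigcap_{i\in\mathcal{I}} M_i = M \cap \bigcap_j (N_j + A_1) = \bigcap_j N_j = S$, whence $S \in \Pi(U,L)$.

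The main obstacle is the bookkeeping in this second case. One must verify that the chief series $C_j = M \cap A_{j+1}$ of $M$ has its $U$-Frattini factors, and hence its prefrattini index set, matching $\{i \in \mathcal{I} : i \ge 2\}$, so that the $N_j$ are indexed by the correct set and the lifted $M_i$ land in the right $\mathcal{M}_i$; this rests on the identity $A_i = A_1 \oplus (M \cap A_i)$ and on the modular-law computations $M \cap (N_j + A_1) = N_j$ and $A_i \cap M = C_j$. A secondary point requiring care is that $\Pi(U,M)$ is here taken with respect to this \emph{particular} chief series of $M$ rather than an arbitrary one, which is legitimate since the lemma and its inductive proof hold for any fixed chief series.
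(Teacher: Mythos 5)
Your proof is correct, but it takes a genuinely different route from the paper's. The paper argues directly and non-inductively: for each chief factor $A_i/A_{i-1}$ it applies Lemma \ref{l:six} followed by Lemma \ref{l:five} to the minimal ideal $A_i/A_{i-1}$ of $L/A_{i-1}$, concluding that $B$ covers the $U$-Frattini factors and, for each non-Frattini factor, lies inside some complement $M_i \in \mathcal{M}_i$; this gives $B \subseteq C := \bigcap_{i\in\mathcal{I}} M_i \in \Pi(U,L)$, and equality follows from the dimension count of Lemma \ref{l:dim} because $B$ and $C$ cover and avoid the same factors. You instead induct on $\dim L$ with a case split at the bottom of the chief series, reducing either to $L/A_1$ or, via Lemmas \ref{l:five} and \ref{l:three}, to a complement $M$ of $A_1$, and then explicitly lift the intersection representation back to $L$. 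The paper's cover-avoid-plus-dimension device buys it exactly the bookkeeping you have to do by hand: it never needs to match index sets or transport maximal subalgebras between $L$ and $M$. Your second case does require the two verifications you flag -- that $C_j = M \cap A_{j+1}$ really is a chief series of $M$ (this uses $[A_1, A_{j+1}] \subseteq A_1 \subseteq A_j$ for $j \geq 1$, so that $A_1$ acts trivially on the higher factors and $L$-irreducibility equals $M$-irreducibility), and that its non-Frattini index set is exactly $\{i-1 : i \in \mathcal{I},\ i \geq 2\}$. You give the lifting direction $N_j \mapsto N_j + A_1$; the converse direction, needed so that no index of $\mathcal{I}$ is left without an $M_i$, follows by sending $M' \in [U+A_j:L]_{max}$ with $A_{j+1} \not\subseteq M'$ to $M' \cap M$, which is maximal in $M$ because $A_1 \subseteq A_j \subseteq M'$ identifies $M' \cap M$ with the maximal subalgebra $M'/A_1$ of $L/A_1 \cong M$. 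With those details filled in, your argument is complete; it is longer than the paper's but has the virtue of exhibiting $S$ directly as an intersection of the required maximal subalgebras rather than deducing equality with such an intersection from a dimension count.
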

\begin{proof} Let $B \in \Omega(U,L)_{min}$ and let $A_i/A_{i-1}$ be a chief factor of $L$. By Lemma \ref{l:six}, 
\[ \left( \frac{B + A_{i-1}}{A_{i-1}}\right) \in \Omega \left( \frac{U+A_{i-1}}{A_{i-1}},\frac{L}{A_{i-1}} \right)_{min}.
\]
We now apply Lemma \ref{l:five} to the minimal ideal $A_i/A_{i-1}$ of $L/A_{i-1}$. If $A_i/A_{i-1}$ is a $U$-Frattini chief factor then it doesn't have a complement in $L/A_{i-1}$ and Lemma \ref{l:five} gives that $A_i \subseteq B+A_{i-1}$, whence $A_i + B = A_{i-1} + B$ and $B$ covers $A_i/A_{i-1}$.
\par

If $A_i/A_{i-1}$ is not a $U$-Frattini chief factor then it has a complement $M_i/A_{i-1}$ in $L/A_{i-1}$ and Lemma \ref{l:five} gives that it has such a complement containing $(B+A_{i-1})/A_{i-1}$; that is $L = M_i + A_i$, $M_i \cap A_i = A_{i-1}$ and $B + A_{i-1} \subseteq M_i$. But now $B \cap A_i \subseteq B \cap A_i + A_{i-1}  = (B + A_{i-1}) \cap A_i \subseteq M_i \cap A_i = A_{i-1}$. It follows that $B \cap A_i = B \cap A_{i-1}$ and $B$ avoids $A_i/A_{i-1}$. Clearly $M_i \in {\mathcal M_i}$ and $B \subseteq C = \bigcap_{i \in {\mathcal I}} M_i \in \Pi(U,L)$. But $B$ covers or avoids the same chief factors of $(1)$ as $C$, so the proof of Lemma \ref{l:dim} shows that $\dim B = \dim C$. It follows that $B = C \in \Pi(U,L)$.
\end{proof}
\bigskip

Putting the previous three Lemmas together yields the following result.

\begin{theor}\label{t:prefrat} $\Omega(U,L)_{min} = \Pi(U,L)$.
\end{theor}
\bigskip

Notice that, in particular, the above result shows that the definition of $U$-prefrattini subalgebras does not depend on the choice of chief series.

\begin{coro}\label{c:six} If $A$ is an ideal of $L$ and $S \in \Pi(U,L)$ then $(S + A)/A \in \Pi((U+A)/A,L/A)$.
\end{coro}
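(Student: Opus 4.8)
The plan is to reduce the statement entirely to the two principal results of this section: the identification $\Omega(U,L)_{min} = \Pi(U,L)$ from Theorem \ref{t:prefrat}, and the behaviour of minimal $\Omega$-elements under sums with ideals from Lemma \ref{l:six}. The strategy is to translate the hypothesis from $\Pi$-language into $\Omega_{min}$-language, apply Lemma \ref{l:six}, push the conclusion through the quotient by $A$, and then translate back using Theorem \ref{t:prefrat} applied to the factor algebra.

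Concretely, I would argue as follows. Since $S \in \Pi(U,L)$, Theorem \ref{t:prefrat} gives $S \in \Omega(U,L)_{min}$. Lemma \ref{l:six} then applies verbatim and yields $S + A \in \Omega(U+A,L)_{min}$. Next I would pass to $L/A$: because $A$ is an ideal satisfying $A \subseteq U + A \subseteq S + A$, the correspondence $X \mapsto X/A$ is a lattice isomorphism from $[U+A:L]$ onto $[(U+A)/A:L/A]$ that sends complement pairs to complement pairs, so it preserves the complemented-interval property and hence carries minimal elements to minimal elements. Consequently $S + A \in \Omega(U+A,L)_{min}$ is equivalent to $(S+A)/A \in \Omega((U+A)/A,\,L/A)_{min}$. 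Finally, applying Theorem \ref{t:prefrat} once more, now to the algebra $L/A$ with the subalgebra $(U+A)/A$, gives $\Omega((U+A)/A,\,L/A)_{min} = \Pi((U+A)/A,\,L/A)$, and therefore $(S+A)/A \in \Pi((U+A)/A,\,L/A)$, as required.

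The only delicate point — and the main obstacle, insofar as there is one — is the quotient correspondence invoked in the middle step: one must check that factoring out the ideal $A$ genuinely transports both the ``$[\,\cdot\,:L]$ is complemented'' condition and minimality between $\Omega(U+A,L)$ and $\Omega((U+A)/A,L/A)$. This is a routine application of the correspondence theorem for ideals and, in fact, is exactly the reduction already exploited at the start of the proof of Lemma \ref{l:six} (``it suffices to show that $(S+A)/A \in \Omega((U+A)/A,L/A)_{min}$''). In the write-up it can therefore be cited rather than reproved, leaving the corollary as essentially a two-line composition of Theorem \ref{t:prefrat} and Lemma \ref{l:six}.
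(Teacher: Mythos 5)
Your proof is correct and follows exactly the route the paper intends: the paper's own proof is the one-line remark that the corollary ``follows from Theorem \ref{t:prefrat} and Lemma \ref{l:six}'', and your write-up simply fills in the translation steps (including the quotient correspondence already implicit at the start of the proof of Lemma \ref{l:six}). No changes needed.
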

\begin{proof} This follows from Theorem \ref{t:prefrat} and Lemma \ref{l:six}.
\end{proof}

\begin{coro}\label{c:fratsub} For every solvable Lie algebra $L$, 
\[ \phi(U,L) = \bigcap_{B \in \Pi(U,L)} B.
\]
\end{coro}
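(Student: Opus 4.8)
The plan is to prove the two inclusions separately, with the forward inclusion being essentially free and the reverse inclusion carrying all the content. For $\phi(U,L) \subseteq \bigcap_{B \in \Pi(U,L)} B$, I would simply invoke Theorem \ref{t:prefrat}, which gives $\Pi(U,L) = \Omega(U,L)_{min} \subseteq \Omega(U,L)$, together with the remark immediately following Lemma \ref{l:one} that $\phi(U,L) \subseteq S$ for every $S \in \Omega(U,L)$. Intersecting over all $B \in \Pi(U,L)$ then yields $\phi(U,L) \subseteq \bigcap_B B$.

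For the reverse inclusion $\bigcap_{B \in \Pi(U,L)} B \subseteq \phi(U,L)$, I would use that $\phi(U,L) = \bigcap\{M : M \in [U:L]_{max}\}$ and reduce to showing that \emph{every} maximal subalgebra $M$ of $L$ containing $U$ contains at least one $U$-prefrattini subalgebra. Indeed, if for each such $M$ there is some $B \in \Pi(U,L)$ with $B \subseteq M$, then $\bigcap_{B' \in \Pi(U,L)} B' \subseteq B \subseteq M$, and intersecting over all $M \in [U:L]_{max}$ gives the claim. (The degenerate case $U = L$ forces $[U:L]_{max} = \emptyset$ and $\mathcal{I} = \emptyset$, so both sides equal $L$; so assume $U \neq L$.)

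So I would fix $M \in [U:L]_{max}$ and work with the fixed chief series $(1)$. Let $i$ be the least index with $A_i \not\subseteq M$, which exists since $A_n = L \not\subseteq M$; by minimality $A_{i-1} \subseteq M$. The key step is to verify that $M$ is a complement of the chief factor $A_i/A_{i-1}$. Maximality of $M$ together with $A_i \not\subseteq M$ forces $L = A_i + M$. Then $M \cap A_i$ is an ideal of $L$ lying between $A_{i-1}$ and $A_i$, so minimality of the chief factor gives $M \cap A_i = A_{i-1}$. This complementation shows $A_i/A_{i-1}$ is not a $U$-Frattini chief factor, hence $i \in \mathcal{I}$; and since $U + A_{i-1} \subseteq M$ with $A_i \not\subseteq M$, we get $M \in \mathcal{M}_i$. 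I would then build a $U$-prefrattini subalgebra inside $M$ by choosing $M_i = M$ at this index and choosing $M_j \in \mathcal{M}_j$ arbitrarily for the remaining $j \in \mathcal{I}$ (each $\mathcal{M}_j$ being nonempty by the very definition of $\mathcal{I}$); the resulting $B = \bigcap_{j \in \mathcal{I}} M_j$ lies in $\Pi(U,L)$ and satisfies $B \subseteq M_i = M$.

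The step I expect to require genuine care is showing that $M \cap A_i$ is an ideal of $L$ equal to $A_{i-1}$. Normality under $M$ is clear; normality under $A_i$ uses that the minimal ideal $A_i/A_{i-1}$ of the solvable algebra $L/A_{i-1}$ is abelian, so that $[A_i, A_i] \subseteq A_{i-1} \subseteq M \cap A_i$ and hence $[A_i, M \cap A_i] \subseteq M \cap A_i$. Since $L = M + A_i$, this makes $M \cap A_i$ an $L$-ideal, and minimality of the chief factor then forces it down to $A_{i-1}$. Everything else is bookkeeping, and combining the two inclusions completes the proof.
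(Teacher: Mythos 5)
Your proposal is correct and follows essentially the same route as the paper: the forward inclusion via Theorem \ref{t:prefrat} and Lemma \ref{l:one}, and the reverse inclusion by locating, for each $M \in [U:L]_{max}$, an index $i$ with $A_{i-1} \subseteq M$, $A_i \not\subseteq M$, concluding $M \in \mathcal{M}_i$, and assembling a $U$-prefrattini subalgebra inside $M$. You merely supply more detail than the paper (the explicit choice of the least such $i$ and the verification that $M$ complements $A_i/A_{i-1}$, the latter not strictly needed since $M \in \mathcal{M}_i$ already follows from $U + A_{i-1} \subseteq M$ and $A_i \not\subseteq M$).
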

\begin{proof} Put $P = \bigcap_{B \in \Pi(U,L)} B$. Then $\phi(U,L) \subseteq P$, by Theorem \ref{t:prefrat} and Lemma \ref{l:one}. Let $M \in [U, L]_{max}$. There is an $i$ such that $A_{i-1} \subseteq M$ but $A_i \not \subseteq M$ ($1 \leq i \leq n$). Then $A_i/A_{i-1}$ is not a $U$-Frattini chief factor of $L$, so $i \in {\mathcal I}$  and $M \in {\mathcal M_i}$. Thus there is $B \in \Pi(U,L)$ such that $B \subseteq M$, whence $P \subseteq M$. Hence $P \subseteq \phi(U,L)$. 
\end{proof}

\begin{coro}\label{c:ellsquared} Let $L$ be completely solvable and let $U$ be a subalgebra of $L$. Then $\Pi(U,L) = \{\phi(U,L)\}$. In particular, $\Pi(0,L) = \{\phi(L)\}$.
\end{coro}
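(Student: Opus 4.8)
The plan is to obtain this as an immediate consequence of the two principal identifications already established, namely Theorem~\ref{t:prefrat} and Theorem~\ref{t:ellsquared}. No fresh argument is needed; the entire content of the corollary is the superposition of those two equalities, and the supporting work has already been carried out in assembling the chain of lemmas that feed into them.

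First I would invoke Theorem~\ref{t:prefrat}, which holds for every solvable $L$ and supplies the set equality $\Pi(U,L) = \Omega(U,L)_{min}$. This reduces the statement entirely to a description of $\Omega(U,L)_{min}$, thereby transferring the problem from the prefrattini side (where it is defined combinatorially via a chief series and choices of complementing maximal subalgebras) to the interval-complement side treated in Section~1. Next I would bring in the hypothesis that $L$ is completely solvable, that is, that $L^2$ is nilpotent, and apply Theorem~\ref{t:ellsquared}, whose assertion is precisely that under this stronger hypothesis $\Omega(U,L)_{min} = \{\phi(U,L)\}$. Chaining the two equalities gives $\Pi(U,L) = \{\phi(U,L)\}$, as required.

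For the particular case I would specialise to $U = 0$ and recall, from the discussion following Lemma~\ref{l:one}, that $\phi(0,L) = \phi(L)$; substituting this yields $\Pi(0,L) = \{\phi(L)\}$. Since both ingredients are quoted directly, there is no genuine obstacle to surmount. The only point demanding care is to record where the hypothesis of complete solvability is actually consumed: Theorem~\ref{t:prefrat} is unconditional, so it is Theorem~\ref{t:ellsquared} alone that requires $L^2$ nilpotent. This is essential rather than incidental, since the remark opening the paragraph before Theorem~\ref{t:ellsquared} (and the examples promised in the next section) show that $\Omega(U,L)_{min}$, and hence $\Pi(U,L)$, can contain more than one element when $L$ fails to be completely solvable. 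Thus the sole substantive check is that the hypotheses of the corollary match exactly those of Theorem~\ref{t:ellsquared}, which they do.
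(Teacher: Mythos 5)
Your proposal is correct and follows exactly the paper's own route: the corollary is obtained by combining Theorem~\ref{t:prefrat} ($\Pi(U,L) = \Omega(U,L)_{min}$, valid for all solvable $L$) with Theorem~\ref{t:ellsquared} ($\Omega(U,L)_{min} = \{\phi(U,L)\}$ under complete solvability). Your additional remarks on where the hypothesis is consumed and on $\phi(0,L) = \phi(L)$ are accurate but not needed beyond what the paper records.
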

\begin{proof} This follows from Theorem \ref{t:prefrat} and Theorem \ref{t:ellsquared}.
\end{proof}

\begin{coro}\label{c:omegaminconj} Suppose that $L$ is a solvable Lie algebra over a field $F$ of characteristic $p$, and suppose further that $L^{\infty}$ has nilpotency class less than $p$. Let $U$ be a subalgebra of $L$. Then the elements of $\Pi(U,L)$ are conjugate under ${\mathcal I}(L:L^{\infty})$.
\end{coro}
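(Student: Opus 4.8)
The plan is to derive this corollary directly from the two principal theorems that precede it, since the hypotheses have been engineered to coincide exactly. First I would observe that the standing assumptions here---$L$ solvable over a field $F$ of characteristic $p$, with $L^{\infty}$ of nilpotency class less than $p$---are verbatim the hypotheses of Theorem \ref{t:omegaminconj}. That theorem already delivers the conjugacy under ${\mathcal I}(L:L^{\infty})$, but phrased in terms of the set $\Omega(U,L)_{min}$ rather than $\Pi(U,L)$. So the only remaining task is to reconcile the two sets.

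The second ingredient is Theorem \ref{t:prefrat}, which asserts the identification $\Omega(U,L)_{min} = \Pi(U,L)$. This is precisely the bridge needed: it says the $U$-prefrattini subalgebras are exactly the minimal elements of $\Omega(U,L)$. Chaining the two results, I would take an arbitrary pair $B_1, B_2 \in \Pi(U,L)$, rewrite them via Theorem \ref{t:prefrat} as elements of $\Omega(U,L)_{min}$, and then invoke Theorem \ref{t:omegaminconj} to conclude that $B_1$ and $B_2$ are conjugate under ${\mathcal I}(L:L^{\infty})$. Since $B_1, B_2$ were arbitrary, the conjugacy holds across all of $\Pi(U,L)$.

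There is, frankly, no substantive obstacle at this stage: all of the genuine difficulty---the inductive descent through minimal ideals, the case analysis according to whether $A \subseteq S_1$, and the delicate manipulation of $\exp(\mathrm{ad}\,a)$ and centralisers---has already been absorbed into the proof of Theorem \ref{t:omegaminconj}, while the equality of the two sets is the content of Theorem \ref{t:prefrat}. The corollary is therefore a one-line consequence of stitching these together, and I would simply write \textit{This follows from Theorem \ref{t:prefrat} and Theorem \ref{t:omegaminconj}.} The only thing worth double-checking is that the hypothesis on $L^{\infty}$ transfers cleanly, but since the two statements share identical hypotheses this is immediate.
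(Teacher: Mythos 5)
Your proposal is correct and matches the paper's own proof exactly: the corollary is obtained by substituting the identification $\Pi(U,L) = \Omega(U,L)_{min}$ from Theorem \ref{t:prefrat} into the conjugacy statement of Theorem \ref{t:omegaminconj}. Nothing further is needed.
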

\begin{proof} This follows from Theorem \ref{t:prefrat} and Theorem \ref{t:omegaminconj}.
\end{proof}
\bigskip

If $L^2$ is not nilpotent then $\Pi(U,L)$ can contain more than one element, as the following example shows.

\begin{ex} Let $F$ be a field of characteristic $p$ (perfect if $p = 2$), and $L = (\oplus_{i=0}^{p-1} Fe_i) \oplus Fc \oplus Fs \oplus Fx$ with $[e_i,c] = e_i$, $[e_i,s] = e_{i+1}$ for $i = 0, \ldots, p-2$, $[e_{p-1},s] = 0$, $[e_i,x] = ie_{i-1}$ for $i = 0, \ldots,p-1$ and $e_{-1} = 0$, $[s,x] = c$, and all other products zero.
\end{ex}

Put $A_0 = 0$, $A_1 = \oplus_{i=0}^{p-1} Fe_i$, $A_2 = A_1 \oplus Fc$, $A_3 = A_2 \oplus Fs$, $A_4 = L$. Then
\[ 0 = A_0 \subset A_1 \subset A_2 \subset A_3 \subset A_4 = L
\] 
is a chief series for $L$ in which $A_2/A_1$ is the only Frattini chief factor. It is, therefore, straightforward to see that the prefrattini subalgebras of $L$ are the one-dimensional subalgebras $F(\alpha c + a)$ where $a \in A_1 = L^{\infty}$, $\alpha \in F$. 
\par

Note that these are all conjugate under inner automorphisms of the form $1 +$ ad\,$a$. This is not always the case, however. For, if $B$ is a faithful completely reducible $L$-module and we form $X = B \oplus L$, where $B^2 = 0$ and $L$ acts on $B$ under the given $L$-module action, then the prefrattini subalgebras are still of the form $F(\alpha c + a)$ where $a \in A_1$. However, $B$ is the unique minimal ideal of $L$ and these subalgebras are not conjugate under inner automorphisms of the form $1 +$ ad\,$b$, $b \in B$. Since $B$ is the nilradical of $X$, defining other inner automorphisms is problematic. Note that $X^{\infty} = B + A_1$ which is not nilpotent.

\bigskip


\begin{thebibliography}{1}

\bibitem{bg} {\sc D.W. Barnes and H.M. Gastineau-Hills}, `On the theory of soluble Lie algebras', {\em Math. Z.} {\bf 106} (1968), 343--354.

\bibitem{bn} {\sc D.W. Barnes and M.L. Newell}, `Some Theorems on saturated homnomorphs of soluble Lie algebras', {\em Math. Z.} {\bf 115} (1970), 179--187.

\bibitem{hk} {\sc P. Hauck and H. Kurtzweil}, `A lattice-theoretic characterization of prefrattini subgroups', {\em Manuscripta Math.} {\bf 1990}, 295--301.

\bibitem{comp} {\sc D.A. Towers}, `On complemented Lie algebras', {\em
J. London Math. Soc.} (2) {\bf 22} (1980), 63--65.

\end{thebibliography}
\end{document}